\numberwithin{equation}{section}
\newtheorem{theorem}{Theorem}[section]
\newtheorem{lemma}[theorem]{Lemma}
\newtheorem{corollary}[theorem]{Corollary}
\newtheorem{notation}[theorem]{Notation}
\theoremstyle{definition}
\newtheorem{definition}[theorem]{Definition}
\theoremstyle{remark}
\newtheorem{remark}[theorem]{Remark}
\newtheorem{example}[theorem]{Example}
\newtheorem{question}[theorem]{Question}
\newtheorem{acknowledgement}{Acknowledgement}
\newcommand{\Min}{\operatorname{Min}}
\newcommand{\sym}{\operatorname{Sym}}
\newcommand{\Kgrade}{\operatorname{K.grade}}
\newcommand{\wAss}{\operatorname{wAss}}
\newcommand{\Spec}{\operatorname{Spec}}
\newcommand{\Ht}{\operatorname{ht}}
\newcommand{\V}{\operatorname{V}}
\newcommand{\Var}{\operatorname{Var}}
\newcommand{\Hom}{\operatorname{Hom}}
\newcommand{\fp}{\frak{p}}
\newcommand{\fq}{\frak{q}}
\newcommand{\fa}{\frak{a}}
\newcommand{\fb}{\frak{b}}
\begin{document}

\author[Asgharzadeh, Dorreh and Tousi]{Mohsen Asgharzadeh, Mehdi Dorreh and Massoud Tousi}

\title[ Direct summands of infinite-dimensional polynomial rings]
{Direct summands of infinite-dimensional polynomial rings}

\address{M. Asgharzadeh, School of Mathematics, Institute for Research in Fundamental
Sciences (IPM), P. O. Box 19395-5746, Tehran, Iran.}
\email{asgharzadeh@ipm.ir}
\address{M. Dorreh,  Department of Mathematics,  Shahid Beheshti
University, G.C., Tehran, Iran. }
\email{mdorreh@ipm.ir}
\address{M. Tousi, Department of Mathematics, Shahid Beheshti
University, G.C., Tehran, Iran, and   School of Mathematics,  Institute for Research in Fundamental
Sciences (IPM), P. O. Box 19395-5746, Tehran, Iran.}

\email{mtousi@ipm.ir} \subjclass[2010]{13C14, 13A50.}

\keywords{ Cohen-Macaulay ring; direct summand;  non-Noetherian ring; polynomial ring; purity.\\The first author   was  supported by a grant from IPM, no. 91130407.\\
The third author   was  supported by a grant from IPM, no. 91130211.
}

\begin{abstract}
Let $k$  be a field and $R$  a pure subring  of the infinite-dimensional polynomial ring $k[X_1,\ldots]$. If $R$ is generated by monomials, then  we show that the equality
of  height and grade holds for all ideals of $R$. Also, we show $R$ satisfies the weak
Bourbaki unmixed property.
As an application,
we give  the Cohen-Macaulay property of the invariant ring of  the action of a linearly reductive group acting by $k$-automorphism    on $k[X_1,\ldots]$. This provides several examples of non-Noetherian  Cohen-Macaulay rings (e.g.  Veronese, determinantal and Grassmanian rings).
\end{abstract}

\maketitle

\section{Introduction}

In this paper we are interested in  the following property
of finite-dimensional polynomial rings which is a version of Hochster-Roberts theorem (see
\cite{HR}):

\begin{theorem} Let $S:=k[X_1,\ldots,X_n]$ be a polynomial ring over a field $k$, and let $R$ be
an $\mathbb{N}$-graded subring of $S$ which is pure in $S$. Then $R$ is Cohen-Macaulay.
\end{theorem}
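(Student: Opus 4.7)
The plan is to recognise this statement as an essentially immediate consequence of the Hochster-Roberts theorem \cite{HR}, preceded by a short check that $R$ is Noetherian and finitely generated as a $k$-algebra, so that the hypotheses of the classical theorem are met.

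First I would verify that $R$ is Noetherian. Given an ascending chain $I_1 \subseteq I_2 \subseteq \cdots$ of ideals of $R$, the extended chain $I_1 S \subseteq I_2 S \subseteq \cdots$ stabilises in the Noetherian ring $S$, and purity of $R \hookrightarrow S$ yields $I_n S \cap R = I_n$ for every $n$ (since purity forces $R/I_n \to S/I_n S$ to be injective), so the original chain also stabilises. Next, since $R \subseteq S$ is a graded inclusion, $R_0 \subseteq S_0 = k$ forces $R_0 = k$; and a Noetherian $\mathbb{N}$-graded ring whose degree-zero component is a field is automatically finitely generated as a $k$-algebra. In particular $R$ is a finitely generated graded $k$-algebra.

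With these preliminaries in hand, the theorem follows from the Hochster-Roberts theorem applied to the pure inclusion $R \hookrightarrow S$ into the regular ring $S = k[X_1,\ldots,X_n]$: any pure subring of a regular Noetherian ring containing a field is Cohen-Macaulay. The main obstacle is of course the Hochster-Roberts theorem itself, which is proved in positive characteristic via tight-closure methods (or equivalently via vanishing of Koszul homology on Frobenius-pure subrings of regular rings) and in characteristic zero either by reduction to positive characteristic or by invoking Boutot's theorem on direct summands of rational singularities, after the routine observation that a graded pure inclusion of finitely generated graded $k$-algebras is automatically split.
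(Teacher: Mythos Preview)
Your proposal is correct and matches the paper's treatment: the paper does not give an independent proof of this statement but presents it as a known result, citing Hochster--Roberts \cite{HR} directly and noting the stronger version via big Cohen--Macaulay algebras \cite[Theorem 2.3]{HH}. Your preliminary verification that $R$ is Noetherian (via $I S \cap R = I$ from purity) and finitely generated over $k$ is exactly the kind of reduction the paper implicitly assumes, and indeed the Noetherian argument you give reappears verbatim in the proof of Theorem~\ref{htpoly} when the paper handles the rings $R_n$.
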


The historical reason for this interest comes from the Cohen-Macaulayness of the invariant ring of  the action of  linearly reductive groups on  polynomial rings. For more details; see \cite[Theorem 6.5.1]{BH}. Suppose a ring $R$ is pure in a Noetherian regular ring which contains a field. As a result of the existence of balanced big
Cohen-Macaulay algebras, $R$ is Cohen-Macaulay, see \cite[Theorem 2.3]{HH}.

Recently, the notion of Cohen-Macaulayness generalized to the non-Noetherian situation; see \cite{HM} and \cite{AT}.
One difficulty is the failure of several classical ideal theory results such as the principal ideal theorem.
In absence of these ideal theory results, the relationship between
dimension theory and homological algebra are given by the following two samples.
Denote the \textit{Koszul} grade by $\Kgrade$. The first easy sample is the following inequality $$\Kgrade_R (\fa,R)\leq \Ht_{R}(\fa),$$which was proved in \cite[Lemma 3.2]{AT}. If the equality is achieved for all ideals, we say $R$ is Cohen-Macaulay in the sense of ideals. The second sample
is the  $\check{C}$\textit{ech} cohomology that used by Hamilton and Marley to define the notion of
strong parameter sequence. A ring $R$ is called
Cohen-Macaulay in the sense of \textit{Hamilton-Marley} if each strong
parameter sequence on $R$ is a regular sequence on $R$.
For more details, see Definition \ref{def1}.

Theorem 1.1 can be extended in two different directions.
First, focus on non-Noetherian finite-dimensional subrings of $k[X_1,\ldots,X_n]$. This kind of investigation was initiated in
\cite{AD} and \cite{ADT}. Second, focus on the infinite-dimensional version of Theorem 1.1.

\begin{notation}
By $R[X_1,\ldots]$, we mean $\bigcup_{i=1}^{\infty}R[X_1,\ldots,X_i]$.
\end{notation}

We refer the reader to \cite{AH}, \cite{HS} and their references to see some properties of
infinite-dimensional polynomial rings via  algebraic statistics and chemistry motivations.

In this paper we attempt to obtain, mostly by a direct limit argument,
results on the widely unknown realm of the infinite-dimensional ring $k[X_1,\ldots]$.  More explicitly,  we are interested in the following question.

\begin{question}\label{Q}
Suppose that $k$ is a field and $R$ is a pure subring of $S:=k[X_1,\ldots]$.   Is $R$ Cohen-Macaulay?
\end{question}

More generally, let $\mathcal{P}$ be a property of commutative Noetherian rings. There is a \textit{cut-paste} idea  to extend
this property to the realm of non-Noetherian rings. To explain the idea, let  $R$ be a non-Noetherian
ring. We refer $\mathcal{P}$ as the \textit{cut} property when $R$ is  written as a direct limit of Noetherian rings satisfying $\mathcal{P}$. If the property  $\mathcal{P}$ behaves  nicely with the direct limit, we refer it as the \textit{paste} property.
We  apply a cut-paste idea
to give a positive answer to Question \ref{Q}
 when $$R \cap k[X_1,\ldots,X_n] \hookrightarrow k[X_1,\ldots,X_n]$$ is pure
for sufficiently large $n$; see  Theorem \ref{htpoly}. It is worth noting that this condition holds, when $R$ is generated by monomials (see  Corollary \ref{htpoly1}). For an application, recall that a linear algebraic group over $k$ is called linearly reductive if every
$G$-module $V$ is a direct sum of irreducible $G$-submodules. For more details; see Remark \ref{red}.  Then
Theorem \ref{htpoly} can be restated as follows.

 \begin{corollary} (see  Corollary \ref{inv})
Let $k$  be an algebraically closed field and $A=k[X_1,\ldots]$. Suppose $G$ is a linearly reductive group over $k$ acting on $A$
(in the sense of Remark 3.7(ii)) by a degree preserving action.
Then $A^G$ is
Cohen-Macaulay in the sense of each part of
Definition \ref{def1}.
\end{corollary}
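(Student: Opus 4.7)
The plan is to reduce the corollary to Theorem \ref{htpoly} applied to $R := A^G$ by checking its hypothesis, namely that
\[
A^G \cap k[X_1,\ldots,X_n] \hookrightarrow k[X_1,\ldots,X_n]
\]
is a pure subring for all sufficiently large $n$.

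First I would use the degree-preserving nature of the action together with Remark~3.7(ii) to pass from $A$ to $G$-stable truncations. Standard representation theory of linear algebraic groups forces each element of $A$ to lie in a finite-dimensional $G$-stable subspace; combined with the degree-preserving hypothesis (which makes the degree-one piece $A_1=\bigoplus_{i\ge 1} kX_i$ a locally finite $G$-module), one obtains a cofinal family of integers $n$ for which $G$ restricts to a $k$-automorphism action on $k[X_1,\ldots,X_n]$. After re-indexing if necessary, I may assume this holds for all sufficiently large $n$, and for such $n$ one gets the identification
\[
A^G \cap k[X_1,\ldots,X_n] = k[X_1,\ldots,X_n]^G.
\]

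Next, since $G$ is linearly reductive and the graded pieces of $k[X_1,\ldots,X_n]$ are finite-dimensional rational $G$-modules, the classical Reynolds operator provides an $k[X_1,\ldots,X_n]^G$-linear retraction
\[
\rho_n : k[X_1,\ldots,X_n] \lo k[X_1,\ldots,X_n]^G
\]
of the inclusion. This exhibits $k[X_1,\ldots,X_n]^G$ as a direct summand, hence a pure subring, of $k[X_1,\ldots,X_n]$. Thus the hypothesis of Theorem \ref{htpoly} is verified and we conclude that $A^G$ is Cohen--Macaulay in the sense of each part of Definition \ref{def1}.

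The main obstacle I foresee is the first step, namely establishing the existence of a cofinal family of $G$-stable truncations $k[X_1,\ldots,X_n]$ under the precise convention of Remark~3.7(ii). Everything else is a classical Hochster--Roberts Reynolds-operator argument, combined with the cut-paste mechanism already packaged in Theorem \ref{htpoly}; in particular, no new non-Noetherian homological input is required at this stage.
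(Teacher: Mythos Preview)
Your proposal follows the paper's proof almost exactly: find $G$-stable finite polynomial truncations, use linear reductivity to obtain a Reynolds-type splitting making each truncation's invariant ring a pure (indeed direct-summand) subring, and then invoke Theorem~\ref{htpoly}. The only point that needs sharpening is precisely the one you flagged. Local finiteness of $A_1=\bigoplus_{i\ge 1}kX_i$ as a $G$-module does \emph{not} guarantee that $k[X_1,\ldots,X_n]$ in the \emph{original} variables is $G$-stable for a cofinal set of $n$, and a mere re-indexing (permutation of the $X_i$) does not repair this in general. The paper resolves it by a genuine linear change of coordinates: using linear reductivity it decomposes $V=A_1=\bigoplus_i V_i$ into finite-dimensional $G$-submodules, picks a $k$-basis $\{Y_j\}$ adapted to this decomposition, and identifies $A\cong\sym_k(V)\cong k[Y_1,Y_2,\ldots]$. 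Setting $b_n=\sum_{j\le n}\dim_k V_j$, the subring $k[Y_1,\ldots,Y_{b_n}]=\sym_k(V_1\oplus\cdots\oplus V_n)$ is then $G$-stable, the identity $A^G\cap k[Y_1,\ldots,Y_{b_n}]=k[Y_1,\ldots,Y_{b_n}]^G$ holds, and the Reynolds splitting you invoke is exactly Remark~\ref{red}(v). With that adjustment your argument and the paper's coincide.
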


Veronese, determinantal and the  Grassmanian  rings are important sources of Cohen-Macaulay
rings (see \cite{BV}). They are subrings of a finite-dimensional polynomial ring over a field.
One can extend  their definitions to the case of an infinite-dimensional polynomial ring.
 We study their  Cohen-Macaulayness  in Section 4.

\section{Preliminary Lemmas}

This section  contains five Lemmata. They do not involve any Cohen-
Macaulay concept. We will use them in the next section.

\begin{lemma}\label{min}
 Let $k$  be a field and $I$ a finitely generated ideal of $S = k[X_1,\ldots]$.  Then each minimal prime ideal of $I$ is finitely generated.
\end{lemma}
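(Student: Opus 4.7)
The plan is to reduce the question to the Noetherian case by choosing a finite polynomial subring that already contains $I$, and then to check that minimal primes extend nicely.

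Since $I$ is finitely generated, there exists some $n$ such that all generators of $I$ lie in $S_n := k[X_1,\ldots,X_n]$. Write $I_0 := I \cap S_n$, so that $I = I_0 S$. The ring $S_n$ is Noetherian, so $I_0$ has finitely many minimal primes $\fp_1,\ldots,\fp_r$, each finitely generated as an ideal of $S_n$. My plan is to show that the minimal primes of $I$ in $S$ are exactly the extensions $\fp_1 S,\ldots,\fp_r S$, which are obviously finitely generated.

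First I would check that each $\fp_i S$ is prime: since $S \cong S_n[X_{n+1},X_{n+2},\ldots] = \bigcup_{m\geq n} S_n[X_{n+1},\ldots,X_m]$, one has $S/\fp_i S \cong (S_n/\fp_i)[X_{n+1},X_{n+2},\ldots]$, which is a direct limit of polynomial rings over the domain $S_n/\fp_i$ and hence itself a domain. Clearly $\fp_i S \supseteq I$. Next I would show that any prime $Q$ of $S$ containing $I$ must contain some $\fp_i S$: the contraction $Q\cap S_n$ is a prime of $S_n$ containing $I_0$, so it contains some $\fp_i$, whence $Q \supseteq \fp_i S$. Finally, to see no strict containments occur among the $\fp_i S$, note that $(\fp_i S) \cap S_n = \fp_i$ (this holds because $S$ is a free $S_n$-module on the monomials in $X_{n+1},X_{n+2},\ldots$, so extension followed by contraction is the identity on ideals of $S_n$); therefore $\fp_i S \supseteq \fp_j S$ forces $\fp_i \supseteq \fp_j$, which by minimality in $S_n$ forces $i=j$.

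The proof is essentially a direct-limit/faithful-flatness argument, and the only step that deserves a moment of care is the contraction identity $(\fp_i S)\cap S_n=\fp_i$, since $S$ is not Noetherian. This is handled by the free-module observation above (equivalently, faithful flatness of $S_n \hookrightarrow S$), which is the technical heart of the argument; once this is in hand, everything else is formal.
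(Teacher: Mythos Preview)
Your argument is correct and follows essentially the same idea as the paper: pick $n$ so that the generators of $I$ lie in $S_n=k[X_1,\ldots,X_n]$, and use that for any prime $\fq$ of $S_n$ the extension $\fq S$ is prime (since $S/\fq S\cong (S_n/\fq)[X_{n+1},\ldots]$). The only difference is organizational: the paper takes a given $\fp\in\min_S(I)$ and observes directly that $(\fp\cap S_n)S$ is a prime with $I\subseteq(\fp\cap S_n)S\subseteq\fp$, forcing $\fp=(\fp\cap S_n)S$ by minimality---so it never needs the contraction identity you single out as the ``technical heart''; your route instead classifies all minimal primes of $I$ (and their finiteness) at once, which is a little more than the lemma asks but costs only the easy faithful-flatness step.
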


\begin{proof}
Let $\{f_1,\ldots,f_n\}$ be a generating set for $I$ and $\fp \in \min_S(I)$. Take $m$ be such that $f_i \in R:=k[X_1,\ldots,X_m]$ for all $1\leq i \leq n$. Observe that
 $ (\fp \cap R) S$  is  prime, because   $$R[X_{m+1},\ldots]/\fq R[X_{m+1},\ldots] \cong R/\fq[X_{m+1},\ldots],$$ for all $\fq \in \Spec(R)$. Also, $ I = (I \cap R)S$. In view of $$ I = (I \cap R)S \subseteq  (\fp \cap R) S  \subseteq \fp,$$
we see that $\fp = (\fp \cap R) S$. Clearly, $\fp \cap R$ is finitely generated as an ideal of $R$. So, $\fp$ is a finitely generated ideal of $S$.
\end{proof}

Let $I$ be an ideal of  a ring $R$. By $\Var_R(I)$ we mean the set of all prime ideals of $R$ containing $I$.
Also, $\min_R(I)$ denoted the set of all minimal prime ideals of $I$.

\begin{lemma}\label {fg}
Let $ R \to S$ be a pure ring homomorphism and $I$ an ideal of $R$. Let $\fp \in \min_R(I)$. Then there exists $\fq \in \min_S(IS)$ such that $\fq \cap R = \fp$. In particular,   if $\min_S(IS)$ is finite, then $\min_R(I)$ is finite.
\end{lemma}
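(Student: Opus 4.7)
The plan is to produce some prime of $S$ lying over $\fp$ and containing $IS$, and then to shrink it to a minimal prime over $IS$ while preserving the contraction to $\fp$. Throughout, I would use two standard consequences of purity of $R\to S$: first, that $JS\cap R = J$ for every ideal $J$ of $R$ (apply purity to the $R$-module $R/J$); and second, that purity is preserved under base change, so in particular the localized map $R_{\fp}\to (R\setminus\fp)^{-1}S=:T^{-1}S$ is again pure.

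For the existence step, apply the first consequence to the pure map $R_{\fp}\to T^{-1}S$ and the maximal ideal $\fp R_\fp$: this gives $\fp T^{-1}S\cap R_\fp = \fp R_\fp$, forcing $\fp T^{-1}S$ to be a proper ideal of $T^{-1}S$. Choose any prime $\fq''$ of $T^{-1}S$ containing $\fp T^{-1}S$ and let $\fq'$ be its contraction to $S$. Unwinding the contractions, $\fq'\cap R=\fp$ and $IS\subseteq \fp S\subseteq \fq'$.

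For the minimality step, use the standard fact that every prime containing an ideal contains a minimal prime over that ideal, to pick $\fq\in\min_S(IS)$ with $\fq\subseteq \fq'$. Then $\fq\cap R$ is a prime of $R$ containing $IS\cap R = I$ and contained in $\fq'\cap R=\fp$, so minimality of $\fp$ over $I$ forces $\fq\cap R=\fp$. For the ``in particular'' clause, the construction yields (via choice) a map $\min_R(I)\to \min_S(IS)$, $\fp\mapsto \fq$, whose composition with $\fq\mapsto \fq\cap R$ is the identity on $\min_R(I)$; hence the map is injective, and finiteness of $\min_S(IS)$ forces finiteness of $\min_R(I)$.

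The main obstacle is the localization step producing the prime $\fq'$ lying over $\fp$, since this is where the purity hypothesis is actually used; once this lying-over statement is in hand, the rest is a routine combination of the standard shrinking-to-a-minimal-prime fact with the identities $\fq'\cap R=\fp$ and $IS\cap R=I$.
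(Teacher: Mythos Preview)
Your argument is correct and follows the same overall structure as the paper's proof: produce a prime $\fq'\in\Var_S(IS)$ with $\fq'\cap R=\fp$, shrink it to some $\fq\in\min_S(IS)$, and use the sandwich $I=IS\cap R\subseteq\fq\cap R\subseteq\fq'\cap R=\fp$ together with minimality of $\fp$ to conclude.

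The only difference lies in how the lying-over prime $\fq'$ is obtained. The paper passes to the quotient, observing that purity gives an injection $R/I\hookrightarrow S/IS$, and then invokes the elementary fact (Kaplansky, p.~41, Ex.~1) that along an injective ring map every minimal prime of the source is the contraction of some prime of the target; applying this to $\fp/I\in\min(R/I)$ yields $\fq'$. You instead localize, using that purity base-changes to $R_\fp\to (R\setminus\fp)^{-1}S$ and hence $\fp$ extends to a proper ideal there. Your route establishes lying-over for \emph{every} prime of $R$ (not just those minimal over $I$), so it is slightly stronger and more self-contained; the paper's route needs only the weaker hypothesis that $R/I\to S/IS$ be injective, and offloads the argument to a standard reference. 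Either way the remainder of the proof is identical.
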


\begin{proof}
Since $ IS \cap R = I$, we have a natural injective homomorphism $R/I \hookrightarrow S/IS$. Note that  $ \fp \in \min_R(I)$. By \cite[Page 41, Ex. 1]{K}, there exists $\fq \prime \in \Var_S(IS)$ such that $\fq \prime \cap R = \fp$. Let $\fq \in \min_S(IS)$ such that $ \fq \subseteq \fq \prime$. Then $$ I \subseteq \fq \cap R \subseteq {\fq \prime} \cap R = \fp.$$
So $\fq \cap R = \fp $. This completes the proof.
\end{proof}

\begin{lemma}\label{fg1}
Let $R \hookrightarrow  S := k[X_1,\ldots]$ be a pure ring homomorphism  and $I$ a finitely generated ideal of $R$. Then $\min_R(I)$ is finite.
\end{lemma}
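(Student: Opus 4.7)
The plan is to transfer the finiteness question from $R$ to $S$, and then from $S$ to a Noetherian polynomial subring. First, by Lemma \ref{fg} applied to the pure inclusion $R \hookrightarrow S$, every $\fp \in \min_R(I)$ lifts to some $\fq \in \min_S(IS)$ with $\fq \cap R = \fp$; since $\fp$ is determined by any such $\fq$, the assignment $\fp \mapsto \fq$ is injective. Consequently $|\min_R(I)| \leq |\min_S(IS)|$, and it suffices to prove that $\min_S(IS)$ is finite.

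Next I would fix a generating set $r_1, \ldots, r_n$ of $I$ and, viewing each $r_i$ as an element of $S$ via the embedding $R \hookrightarrow S$, pick $m$ large enough so that $r_1, \ldots, r_n \in S_m := k[X_1, \ldots, X_m]$. Set $J := (r_1, \ldots, r_n) S_m$, so $JS = IS$. The goal is then to show that contraction to $S_m$ gives an injection $\min_S(IS) \hookrightarrow \min_{S_m}(J)$; since $S_m$ is Noetherian, the target is finite, and we are done.

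For the injectivity and well-definedness, I would follow the argument already used in the proof of Lemma \ref{min}. For any $\fq \in \min_S(IS)$, the isomorphism
\[
S/(\fq \cap S_m)S \cong (S_m/(\fq \cap S_m))[X_{m+1}, \ldots]
\]
shows that $(\fq \cap S_m)S$ is prime; squeezed between $IS$ and $\fq$, minimality of $\fq$ forces $\fq = (\fq \cap S_m) S$, so the contraction map is injective. To see $\fq \cap S_m \in \min_{S_m}(J)$, suppose $\fp \subseteq \fq \cap S_m$ is a prime of $S_m$ containing $J$; then $\fp S \subseteq \fq$ is prime and contains $IS$, so $\fp S = \fq$ by minimality, and contracting back via $\fp S \cap S_m = \fp$ gives $\fp = \fq \cap S_m$.

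The main technical point, which I would expect to do carefully, is the identity $\fp S \cap S_m = \fp$ for $\fp \in \Spec(S_m)$; this is immediate from the faithful flatness of $S_m \hookrightarrow S = S_m[X_{m+1}, \ldots]$, or by direct inspection of polynomial coefficients. Once this is in hand, combining the two reductions yields $|\min_R(I)| \leq |\min_S(IS)| \leq |\min_{S_m}(J)| < \infty$, which proves the lemma.
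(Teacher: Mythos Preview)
Your proof is correct, but it takes a different route from the paper's. Both arguments begin the same way, invoking Lemma~\ref{fg} to reduce to the finiteness of $\min_S(IS)$. From there, the paper proceeds in two short steps: Lemma~\ref{min} shows that every element of $\min_S(IS)$ is finitely generated, and then Anderson's theorem \cite[Theorem]{An} (which says that if every minimal prime over an ideal is finitely generated, then there are only finitely many of them) finishes the job. You instead bypass the appeal to Anderson's result by extending the argument of Lemma~\ref{min}: having shown $\fq = (\fq \cap S_m)S$ for each $\fq \in \min_S(IS)$, you check that $\fq \mapsto \fq \cap S_m$ lands in $\min_{S_m}(J)$ and is injective, so finiteness follows directly from Noetherianity of $S_m$. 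Your approach is more self-contained and arguably more informative (it identifies $\min_S(IS)$ explicitly with $\min_{S_m}(J)$), at the cost of a few extra lines; the paper's version is terser but relies on an outside citation.
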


\begin{proof}
In view of  Lemma \ref{min}, elements of $\min_S(IS)$ are finitely generated. By \cite[Theorem]{An},  $\min_S(IS)$ is finite.
The claim  now follows by Lemma \ref{fg}.
\end{proof}

\begin{lemma}\label{fg2}
Let $k$  be a field, $R \hookrightarrow  k[X_1,\ldots]$ a pure ring homomorphism and   $I$  an ideal of $R$. If $0 \leq n \leq \Ht_R(I)$, then there are $x_1,\ldots,x_n \in I$ such that $$i \leq \Ht_R((x_1,\ldots,x_i)R)$$ for all $0 \leq i \leq n $.
\end{lemma}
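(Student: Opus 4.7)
The plan is to induct on $i$, choosing $x_i$ one element at a time. For $i = 0$ there is nothing to prove; assume $x_1, \ldots, x_{i-1} \in I$ have been chosen so that $j \leq \Ht_R((x_1, \ldots, x_j)R)$ for every $0 \leq j \leq i-1$, and look for $x_i$. Since $(x_1, \ldots, x_{i-1})R$ is a finitely generated ideal of $R$ and $R \hookrightarrow k[X_1, \ldots]$ is pure, Lemma \ref{fg1} guarantees that $(x_1, \ldots, x_{i-1})R$ has only finitely many minimal primes; list them as $\fp_1, \ldots, \fp_m$.

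I would then split these primes according to whether they swallow $I$: let $A = \{j : I \subseteq \fp_j\}$ and $B = \{j : I \not\subseteq \fp_j\}$. Classical prime avoidance applied to the finite family $\{\fp_j\}_{j \in B}$ produces an element $x_i \in I \setminus \bigcup_{j \in B} \fp_j$ (any element of $I$ suffices if $B = \emptyset$). To verify $\Ht_R((x_1, \ldots, x_i)R) \geq i$, let $\fq$ be a prime of $R$ containing $(x_1, \ldots, x_i)R$. A Zorn's lemma argument places some minimal prime $\fp_j$ of $(x_1, \ldots, x_{i-1})R$ inside $\fq$. If $j \in A$ then $\fq \supseteq \fp_j \supseteq I$, so $\Ht_R(\fq) \geq \Ht_R(I) \geq n \geq i$. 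If $j \in B$ then $x_i \in \fq \setminus \fp_j$, whence $\fq \supsetneq \fp_j$; together with the inductive lower bound $\Ht_R(\fp_j) \geq \Ht_R((x_1, \ldots, x_{i-1})R) \geq i-1$, this yields $\Ht_R(\fq) \geq i$. In either case the desired inequality follows, completing the induction.

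The main obstacle I expect is the absence of Krull's principal ideal theorem. In the Noetherian version of this statement, PIT forces each $\fp_j$ to have height exactly $i - 1$, which automatically gives $I \not\subseteq \fp_j$ (since $\Ht_R(I) \geq n > i - 1$) and reduces everything to a single application of prime avoidance. Over our ring $R$ such an upper bound on $\Ht_R(\fp_j)$ is unavailable, and in principle some $\fp_j$ could even contain $I$. The key observation making the non-Noetherian proof go through is that precisely in this oversized case the global hypothesis $\Ht_R(I) \geq n$ delivers the height bound on $\fq$ for free, bypassing the need to avoid such primes. The other indispensable ingredient is Lemma \ref{fg1}: without finiteness of $\min_R((x_1, \ldots, x_{i-1})R)$, the prime avoidance step would not be available at all.
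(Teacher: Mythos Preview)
Your proof is correct and follows the same approach as the paper: induct, invoke Lemma~\ref{fg1} to get finitely many minimal primes of $(x_1,\ldots,x_{i-1})R$, then use prime avoidance to pick $x_i$. The only cosmetic difference is in which primes are avoided---the paper avoids only the minimal primes of height exactly $i-1$ (which cannot contain $I$ since $\Ht_R(I)\geq n>i-1$) and then argues the new ideal cannot have height $i-1$, whereas you avoid all minimal primes not containing $I$ and handle the remaining ones directly via the bound $\Ht_R(\fq)\geq\Ht_R(I)\geq n$.
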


\begin{proof}
We prove the claim by induction on $n$.  The first step of induction is obvious. Now, suppose $n > 0$ and the claim  has been proved for all $j < n$.  Suppose $ j < n$. By inductive hypothesis, we can find  $x_1,\ldots, x_j \in I$ such that $$i \leq \Ht_R((x_1,\ldots,x_i)R)$$ for all $1\leq i \leq j$.
Set $$X:= \{Q \in \min((x_1,\ldots,x_j)R): \Ht_R(Q)=j\}.$$  Suppose  $X=\emptyset$.
Then, $j + 1 \leq \Ht_R((x_1,\ldots,x_j)R)$. Hence, we have $$j+1 \leq \Ht_R((x_1,\ldots,x_j,x_{j+1})R)$$  for all $x_{j+1} \in I$. Thus, without loss of the generality, we may and do assume that  $X\neq\emptyset$. It follows by Lemma \ref{fg1} that $X$ is finite. Note that $ I \nsubseteqq \bigcup_{Q \in X} Q$, unless $\Ht_R(I) \leq j < n$ which is impossible by our assumptions. Pick $x_{j+1} \in I \setminus \bigcup_{Q \in X} Q$. Thus, $$ j \leq \Ht_R((x_1,\ldots,x_j)R) \leq \Ht_R((x_1,\ldots,x_{j+1})R).$$ But, $\Ht_R((x_1,\ldots,x_{j+1})R) \neq j$. So  $j+1 \leq \Ht_R((x_1,\ldots,x_{j+1})R)$.
\end{proof}

The module case of the next result (when the base ring is fixed) is well-known.
\begin{lemma}\label{pure}
Let $R$, $S$ and $T$ be commutative rings. Let $\varphi : R \to S$ and $\theta: S \to T$ be ring homomorphisms. The following hold:
\begin{enumerate}
\item[$\mathrm{(i)}$] If $\varphi$ and $\theta$ are pure, then $\theta \varphi$ is pure.
\item[$\mathrm{(ii)}$] If $\theta \varphi$ is pure, then $\varphi$ is pure.
\end{enumerate}
\end{lemma}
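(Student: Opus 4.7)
The plan is to use the standard characterization of a pure ring homomorphism $\varphi: R\to S$: the map is pure iff for every $R$-module $M$, the canonical map $M\to M\otimes_R S$, $m\mapsto m\otimes 1$, is injective. With this definition in hand, both parts follow from elementary tensor product manipulations; the only point worth monitoring is that the compositions of canonical maps correspond correctly under the isomorphism $(M\otimes_R S)\otimes_S T\cong M\otimes_R T$.

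\medskip

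\noindent\textbf{Proof of (i).} Let $M$ be an arbitrary $R$-module. First I would use purity of $\varphi$ to conclude that the canonical map
$$\alpha_M: M\lo M\otimes_R S, \quad m\mapsto m\otimes 1_S,$$
is injective. Next, view $M\otimes_R S$ as an $S$-module via the right factor; since $\theta:S\to T$ is pure, the canonical map
$$\beta_{M\otimes_R S}: M\otimes_R S\lo (M\otimes_R S)\otimes_S T, \quad x\mapsto x\otimes 1_T,$$
is injective. The composition $\beta_{M\otimes_R S}\circ\alpha_M$ is therefore injective. Finally I would invoke the natural isomorphism $(M\otimes_R S)\otimes_S T\cong M\otimes_R T$ sending $(m\otimes s)\otimes t$ to $m\otimes\theta(s)t$; under this identification the composition $\beta_{M\otimes_R S}\circ\alpha_M$ becomes exactly the canonical map $M\to M\otimes_R T$ associated with $\theta\varphi$. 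Hence $\theta\varphi$ is pure.

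\medskip

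\noindent\textbf{Proof of (ii).} Let $M$ be any $R$-module. Since $\theta\varphi$ is pure, the canonical map
$$\gamma_M: M\lo M\otimes_R T, \quad m\mapsto m\otimes 1_T,$$
is injective. But $\gamma_M$ factors through $M\otimes_R S$ as $\gamma_M=(\id_M\otimes\theta)\circ\alpha_M$, where $\alpha_M:M\to M\otimes_R S$ is the canonical map for $\varphi$ and $\id_M\otimes\theta:M\otimes_R S\to M\otimes_R T$ is induced by $\theta$. Injectivity of the composition forces injectivity of $\alpha_M$, so $\varphi$ is pure.

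\medskip

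\noindent\textbf{Main obstacle.} There is essentially no obstacle; the statement is purely formal once one adopts the standard ``every module stays embedded'' definition of purity. The only small care needed is to verify that the identification $(M\otimes_R S)\otimes_S T\cong M\otimes_R T$ really sends the composition of the two canonical embeddings to the single canonical embedding of $M$ for the composed ring map $\theta\varphi$, which is immediate by tracking the element $m\otimes 1_S\otimes 1_T$.
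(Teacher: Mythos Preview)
Your proof is correct and follows essentially the same approach as the paper's: both arguments reduce to the associativity isomorphism $(M\otimes_R S)\otimes_S T\cong M\otimes_R T$ and the observation that purity of $\theta$ applied to the $S$-module $M\otimes_R S$ gives injectivity of the second map, while purity of $\varphi$ gives injectivity of the first. The paper packages this into an explicit commutative diagram (with $S\otimes_R M$ on the left rather than $M\otimes_R S$ on the right, and routing through $(S\otimes_S S)\otimes_R M$), whereas you spell out the same factorization directly; the content is identical.
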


\begin{proof}
Let $M$ be an $R$-module. Set $\psi:= \theta \otimes Id_{S \otimes_R M}$. Then the following diagram is commutative:
$$
 \xymatrix{ R \otimes_{R} M  \ar[r]^{\varphi \otimes {Id_M}} &  S \otimes_{R} M \ar[r]^{\theta \otimes {Id_M}} \ar[d]  & T \otimes_{R} M   \ar[d]\\
 & (S \otimes_ {S} S) \otimes _{R} M \ar[r] \ar[d] & (T \otimes_ {S} S) \otimes _{R} M \ar[d]\\
 & S \otimes _{S} (S \otimes_{R} M) \ar[r]^{\psi} &  T \otimes _{S} (S \otimes_{R} M),}
$$where columns are isomorphism.
Now we prove the lemma.
\begin{enumerate}
\item[$\mathrm{(i)}$] If $\varphi$ and $\theta$ are pure, then ${\varphi \otimes {Id_M}}$ and $\psi$ are one-to-one. So ${\varphi \otimes {Id_M}}$ and ${\theta \otimes {Id_M}}$ are one-to-one. It is now clear that $\theta \varphi$ is pure, because $({\theta \otimes {Id_M}})({\varphi \otimes {Id_M}}) = {\theta \varphi \otimes {Id_M}}$.
\item[$\mathrm{(ii)}$] If $\theta \varphi$ is pure, then $({\theta \otimes {Id_M}})({\varphi \otimes {Id_M}}) = {\theta \varphi \otimes {Id_M}}$ is one-to-one. Hence ${\varphi \otimes {Id_M}} $ is one-to-one. Therefore $\varphi$ is pure.
\end{enumerate}
\end{proof}

\section{infinite-dimensional Cohen-Macaulay rings}

Our main result in this section is Theorem \ref{htpoly} and its corollaries.
Let $\fa$ be an ideal of a ring $R$ and $M$ an
$R$-module. Suppose first that $\fa$ is
finitely generated  with the  generating set
$\underline{x}:=x_{1},\ldots, x_{r}$. Denote the Koszul
complex  of $R$ with respect to $\underline{x}$ by
$\mathbb{K}_{\bullet}(\underline{x})$. Koszul grade of $\fa$ on $M$
is defined by
$$\Kgrade_R(\fa,M):=\inf\{i \in\mathbb{N}\cup\{0\} | H^{i}(\Hom_R(
\mathbb{K}_{\bullet}(\underline{x}), M)) \neq0\}.$$ Note that by
\cite[Corollary 1.6.22]{BH} and \cite[Proposition 1.6.10 (d)]{BH},
this does not depend on the choice of generating sets of $\fa$.
Suppose now that $\fa$ is a general ideal (not necessarily finitely generated). Take $\Sigma$ to be the family of all finitely generated
subideals $\fb$ of $\fa$.
The Koszul grade of
$\fa$ on $M$ can be defined by
$$\Kgrade_R(\fa,M):=\sup\{\Kgrade_R(\fb,M):\fb\in\Sigma\}.$$ By using
\cite[Proposition 9.1.2 (f)]{BH}, this definition coincides with the
original definition for finitely generated ideals.

\begin{remark} (i): A system $\underline{x} = x_{1}, \ldots,
x_\ell$ of elements of $R$ is called a weak regular sequence on $M$ if $x_i$ is a nonzero-divisor on $M/(x_1,\ldots, x_{i-1})M$ for all $i=1,\ldots,\ell$. The classical grade  of an ideal $\fa$ on $M$ is defined to be the supremum of the lengths of all weak regular sequences on $M$ contained in $\fa$.

(ii): Recall that the classical grade coincides with the Koszul grade if the
ring and the module both are Noetherian.

(iii): Let $R$ be a ring, $M$ an $R$-module and  $\underline{x} = x_{1}, \ldots,
x_\ell$ a sequence of elements of $R$. For each $m \geq n$, there is a chain map $\varphi^{m} _{n} (\underline{x})
:\mathbb{K}_{\bullet}(\underline{x}^{m})\longrightarrow
\mathbb{K}_{\bullet}(\underline{x}^{n}),$ which is induced via
multiplication by $(\prod x_{i})^{m-n}$.
Recall from \cite{Sch} that $\underline{x}$ is
weak proregular if for each $n>0$ there exists an $m \geq n$ such
that the maps $H_{i}(\varphi^{m} _{n} (\underline{x})) :H_{i}
(\mathbb{K}_{\bullet}(\underline{x}^{m}))\longrightarrow H_{i}
(\mathbb{K}_{\bullet}(\underline{x}^{n}))$ are zero for all $i \geq
1$.
\end{remark}

Now, we recall the following key definitions:

\begin{definition}\label{def1} (See \cite[Definition 3.1]{AT} and its references.) Let $R$ be a ring. \begin{enumerate}
\item[$\mathrm{(i)}$] $R$ is called
Cohen-Macaulay in the sense of \textit{Glaz} if for each prime ideal $\fp$
of $R$, $$\Ht_{R}(\fp)=\Kgrade_{R_{\fp}}(\fp R_{\fp},R_{\fp}).$$
\item[$\mathrm{(ii)}$]   Recall that a prime ideal $\fp$ is \textit{weakly associated} to a module $M$ if $\fp$
is minimal over $(0 :_R m)$ for some $m\in M$. We denote the set of
weakly associated primes of $M$ by $\wAss_RM$. Let $\fa$ be a finitely
generated ideal of $R$. Set $\mu(\fa)$ for the minimal number of
elements of $R$ that needs to generate $\fa$.
Assume that for each
ideal $\fa$ with the property $\Ht (\fa)\geq \mu(\fa)$, we have $\min
(\fa)=\wAss_R( R/\fa)$. A ring with such a property is called \textit{weak
Bourbaki unmixed} (Abb. by WB). For more details see \cite{Ha}.

\item[$\mathrm{(iii)}$]
By  $H^{i}_{\underline{x}}(M)$, we mean the $i$-th cohomology of the \v{C}ech complex of $M$ with respect to $\underline{x}$.
Adopt the above notation. Then
$\underline{x}$ is called a \textit{parameter sequence} on
$R$, if:
(1) $\underline{x}$ is a weak proregular sequence; (2)
$(\underline{x})R \neq R$; and (3) $H^{\ell}_{\underline{x}}
(R)_{\fp} \neq 0$ for all $\fp \in
\V(\underline{x}R)$. Also, $\underline{x}$ is called a strong
parameter sequence on $R$ if $x_{1},\ldots, x_{i}$ is a parameter
sequence on $R$ for all $1\leq i \leq \ell$. $R$ is called
Cohen-Macaulay in the sense of \textit{Hamilton-Marley} (Abb. by HM) if each strong
parameter sequence on $R$ is a regular sequence on $R$. For more details see \cite{HM}.

\item[$\mathrm{(iv)}$] Let $\mathcal{A}$ be a non-empty  class of
ideals of a ring $R$. $R$ is called Cohen-Macaulay in the sense
of $\mathcal{A}$, if $\Ht_{R}(\fa)=\Kgrade_R(\fa,R)$ for all
$\fa\in\mathcal{A}$. We denote this property by $\mathcal{A}$.
The classes that we are interested in, are $\Spec(R)$,
$\max (R)$, the class of all ideals and the class of
all finitely generated ideals.
 \end{enumerate}
\end{definition}

\begin{remark} \label{rem}The following diagram was proved in \cite[3.2. Relations]{AT}:\[\begin{array}{lllllll}
\textmd{Max}\Leftarrow\textmd{Spec}
\Leftrightarrow\textmd{ideals}\Rightarrow\textmd{Glaz}
\Rightarrow\textmd{f.g.
ideals}\Rightarrow\textmd{HM}\Leftarrow\textmd{WB}.
\end{array}\]
Also, when  the base ring is coherent,
$\textmd{Spec}\Rightarrow\textmd{WB}$.
\end{remark}

The following will play an essential role  in the proof of Corollary \ref{htpoly1}.

\begin{theorem}\label{htpoly}
Let $k$  be a field and $R$   a subring of $k[X_1,\ldots]$ containing $k$. Assume that there is a strictly increasing infinite sequence $\{b_n\}_{n\in \mathbb{N}}$ of positive integers
such that $R \cap k[X_1,\ldots,X_{b_n}] \hookrightarrow k[X_1,\ldots,X_{b_n}]$ is pure
 for all $n \in \mathbb{N}$. Then $R$ is Cohen-Macaulay in the sense of each part of
Definition \ref{def1}.
\end{theorem}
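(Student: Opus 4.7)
The strategy I will employ is the cut-and-paste idea described in the introduction: cut $R$ along the filtration $R_m := R \cap k[X_1,\ldots,X_{b_m}]$ and $S_m := k[X_1,\ldots,X_{b_m}]$, import Cohen-Macaulay information from the Noetherian slices $S_m$, and paste the conclusions back together via the description $R = \bigcup_m R_m$. A first observation is that the hypothesis, together with the stability of pure ring homomorphisms under filtered colimits (a form of Lemma \ref{pure}), promotes to the global purity of $R \hookrightarrow S := k[X_1,\ldots]$. In particular, $IS \cap R = I$ for every ideal $I$ of $R$, and Lemma \ref{fg} provides a lying-over prime in $S$ for each prime of $R$.

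The heart of the argument will be to prove, for a finitely generated ideal $\fa$ of $R$, the equality $\Ht_R(\fa) = \Kgrade_R(\fa,R)$. The inequality $\Kgrade_R(\fa,R) \leq \Ht_R(\fa)$ is \cite[Lemma 3.2]{AT}. For the reverse, I would set $n := \Ht_R(\fa)$ and invoke Lemma \ref{fg2} to produce $x_1,\ldots,x_n \in \fa$ with $\Ht_R((x_1,\ldots,x_i)R) \geq i$ for every $i$. After choosing $m$ so that all $x_i$ lie in $S_m$, I plan to combine the purity of $R \hookrightarrow S$ (which lifts chains of primes from $R$ up to $S$ via Lemma \ref{fg}) with the fact that $S_m \hookrightarrow S$ is free (hence faithfully flat, so that primes contract with height preserved) to deduce $\Ht_{S_m}((x_1,\ldots,x_i)S_m) \geq i$. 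Since $S_m$ is a regular Noetherian, hence Cohen-Macaulay, ring, this height bound forces $x_1,\ldots,x_n$ to be a regular sequence in $S_m$. The same comparison applied inside each $S_{m'}$ with $m' \geq m$ shows it remains regular in every $S_{m'}$; purity of $R_{m'} \hookrightarrow S_{m'}$ then transfers it to $R_{m'}$, and taking the directed union yields a weak regular sequence of length $n$ on $R$. Therefore $\Kgrade_R(\fa,R) \geq n$, giving the desired equality.

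Once Cohen-Macaulayness in the sense of finitely generated ideals is established, the implication diagram in Remark \ref{rem} yields Cohen-Macaulayness in the senses of Max, Glaz, and Hamilton-Marley. To extend from f.g.\ ideals to the classes $\Spec(R)$ and ``all ideals'', I will observe that both $\Ht_R(-)$ and $\Kgrade_R(-,R)$ agree with their suprema over finitely generated subideals (for Koszul grade this is the definition; for height it follows from Lemma \ref{fg1} together with a cofinality argument over f.g.\ subideals). For the weak Bourbaki property, given a finitely generated $\fa$ with $\Ht_R(\fa) \geq \mu(\fa)$, I would re-select a generating set via Lemma \ref{fg2} and prime avoidance across the finite set of minimal primes supplied by Lemma \ref{fg1}, arranging these generators to be a regular sequence; the standard unmixedness of an ideal generated by a regular sequence then forces $\min_R(\fa) = \wAss_R(R/\fa)$.

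The main obstacle I expect is the height comparison $\Ht_R((x_1,\ldots,x_i)R) \leq \Ht_{S_m}((x_1,\ldots,x_i)S_m)$ in the central step. Pure ring homomorphisms do not in general preserve heights, so the two-stage passage $R \hookrightarrow S$ followed by contraction back to $S_m$ must be arranged to exploit both the purity (through Lemma \ref{fg}, which moves minimal primes with the correct lying-over) and the freeness of $S_m \hookrightarrow S$ (which preserves heights after contraction): minimal primes of $(x_1,\ldots,x_i)R$ lift to minimal primes of $(x_1,\ldots,x_i)S$, whose contractions to $S_m$ then carry the height that forces the regular-sequence condition in $S_m$.
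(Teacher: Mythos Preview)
Your cut-and-paste framework is the right one and matches the paper, but the central height-comparison step does not go through as you describe it. You claim that purity of $R \hookrightarrow S$ ``lifts chains of primes from $R$ up to $S$ via Lemma~\ref{fg}''. Lemma~\ref{fg} does no such thing: it only guarantees that each minimal prime of $I$ in $R$ is the contraction of some minimal prime of $IS$ in $S$. It says nothing about chains \emph{below} that prime, and pure ring maps do not satisfy going-down in general. So from $\Ht_R((x_1,\ldots,x_i)R)\geq i$ you cannot conclude $\Ht_{S}((x_1,\ldots,x_i)S)\geq i$, and hence you never learn that $x_1,\ldots,x_i$ is a regular sequence in $S_m$. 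The subsequent descent to $R_{m'}$ via purity of $R_{m'}\hookrightarrow S_{m'}$ is fine in principle, but it has nothing to descend.

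The paper repairs exactly this point by working in $R_k:=R\cap k[X_1,\ldots,X_{b_k}]$ rather than in $S_m$. Purity is used in the \emph{other} direction: since $R_k\to R$ is pure, Lemma~\ref{fg} shows every minimal prime of $(a_1,\ldots,a_i)R_k$ is the contraction of some $Q_j\in\min_R((a_1,\ldots,a_i)R)$. To force $\Ht_{R_k}(Q_j\cap R_k)\geq i$, the paper chooses, once and for all, finitely many elements $b_{j_t}\in P_{j_t}\setminus P_{j_{t-1}}$ witnessing a length-$i$ chain below each $Q_j$ in $R$, and then takes $k$ large enough that all these witnesses already lie in $R_k$. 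This elementary ``absorb the witnesses into the slice'' trick is what replaces the going-down you are implicitly assuming, and it is the missing idea in your argument. Once $\Ht_{R_k}((a_1,\ldots,a_i)R_k)=i$, Cohen--Macaulayness of the Noetherian ring $R_k$ (from Hochster--Roberts applied to $R_k\hookrightarrow S_k$) gives the regular sequence there, and the direct limit finishes as you intended.

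Your weak-Bourbaki sketch has a related gap: Lemma~\ref{fg2} produces elements $x_1,\ldots,x_\ell\in\fa$ with the height condition, but there is no reason these \emph{generate} $\fa$, so ``re-selecting a generating set'' is not available. The paper keeps the original generators $\underline{y}$ and shows, by comparing $\underline{x}$ and $\underline{y}$ inside the Noetherian Cohen--Macaulay local rings $(R_m)_{\fp_m}$, that $\underline{y}$ itself becomes a regular sequence on $R_\fp$ for each $\fp\in\wAss_R(R/\fa)$; unmixedness then follows from the non-Noetherian results \cite[Theorem~3.3, Lemmas~3.5 and~3.9]{AT}.
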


\begin{proof}
We denote $R \cap k[X_1,\ldots,X_{b_n}]$ by $R_n$ for all $n \in \mathbb{N}$.  In view of Remark \ref{rem}, we need to show that $R$ is Cohen-Macaulay in the sense of ideals
and $R$ is weak
Bourbaki unmixed.
 Let $n \in \mathbb{N}$. One has $R_n \cap (Jk[X_1,\ldots,X_{b_n}]) = J$ for every ideal $J$ of $R_n$. So $R_n$ is a Noetherian ring. Therefore, we deduce from \cite[Theorem 10.4.1 and Remark 10.4.2]{BH} that $R_n$ is a Noetherian Cohen-Macaulay ring. Keep in mind that the ring homomorphism $k[X_i:1\leq i\leq b_n]\to k[X_i:1\leq i<\infty]$ is pure.
By looking at the following commutative diagram and Lemma \ref{pure}
$$
\xymatrix{R_{n}\ar[r] \ar[d] & k[X_i:1\leq i\leq b_n] \ar[d]\\
 R \ar[r] & k[X_i:1\leq i<\infty],}
$$
the ring homomorphism $R_n \to R$ is pure.
Also, by \cite[Lemma 3.9]{ADT}, $$R = \bigcup_{n\in \mathbb{N}}R_n \to k[X_i:1\leq i<\infty] = \bigcup_{n\in \mathbb{N}}k[X_1,\ldots,X_{b_n}]$$ is a pure ring homomorphism.

$\mathrm{(i)}$ First we show that $R$ is Cohen-Macaulay in the sense of ideals. Let $I$ be an ideal of $R$ such that $n \leq \Ht_R(I)$. We use  Lemma \ref{fg2}, to find elements $a_1,\ldots,a_n \in I$ such that $$i \leq \Ht_R((a_1,\ldots,a_i)R), $$ for all $1\leq i \leq n$.

Now we claim  that, for each $1\leq i \leq n$, there exists $l_i \in \mathbb{N}$ such that $a_1/1, \ldots, a_i/1$ is a regular sequence in $(R_k)_{\fq}$ for every $k\geq l_i$ and $\fq \in \Var_{R_{k}}((a_1,\ldots,a_i)R_{k})$.
To this end, let $1 \leq i \leq n$. In view of Lemma \ref{fg1}, $\min_R(a_1,\ldots,a_i)R$ is finite. Denote it by $ \{Q_1,\ldots,Q_m\} $. We have  the following chain of prime ideals$$P_{j_0} \subsetneqq \ldots \subsetneqq P_{j_{i}}=Q_j$$for all  $1 \leq j \leq m$. Pick $b_{j_{t}} \in P_{j_{t}}\setminus P_{j_{t-1}}$ for all $1 \leq j \leq m$ and $1\leq t\leq i$. Set $$Y:= \{b_{jt}|   1 \leq j \leq m , 1 \leq t \leq i \}.$$ Since $Y$ is finite, there exists $\ell_i \in \mathbb{N}$ such that $ Y\subseteq R_{\ell_i} $ and  $\{a_1,\ldots,a_n \}\subseteq R_{\ell_i}$. We use this to deduce   that $$i \leq \Ht_{R_{k}}( Q_j \cap R_k)  $$ for all  $1 \leq j \leq m$ and $\ell_i \leq k$.
Let $\ell_i \leq k$. By Lemma \ref{fg}, for each $ \fp \in \min_{R_{k}}(( a_1, \ldots, a_i)R_k)$, there is $1\leq j \leq m$ such that $Q_j \cap R_k = \fp$. Hence $\Ht_{R_{k}}((a_1,\ldots,a_i)R_{k}) \geq i.$  The reverse inequality  holds, because $R_{k}$ is Noetherian. So $$\Ht_{(R_{k})_{\fq}}((a_1,\ldots,a_i)(R_{k})_{\fq})) = i$$ for all  $\fq \in \Var_{R_{k}}((a_1,\ldots,a_i)R_{k})$. Since $(R_k)_\fq$ is a Noetherian Cohen-Macaulay local ring,  $a_1/1, \ldots, a_i/1$ is a regular sequence in $(R_k)_\fq$. This proves the claim.

Set $l := \max \{\ell_1,\ldots,\ell_n\}$ and fix $k\geq l$. Then $a_1/1, \ldots, a_i/1$ is a regular sequence in $(R_k)_\fq$ for all $1 \leq i \leq n$ and $\fq \in \Var_{R_{k}}((a_1,\ldots,a_i)R_{k})$. Then $a_1, \ldots,a_n$ is a regular sequence in $R_k$  for all $k\geq l$. Hence $a_1, \ldots,a_n$ is a weak regular sequence in $R$. Consequently $n \leq \Kgrade_R(I,R)$. So $\Ht_R(I) \leq \Kgrade_R(I,R)$. The reverse inequality is always true by \cite[Lemma 3.2]{AT}.

$\mathrm{(ii)}$ Here we show that $R$ is weak
Bourbaki unmixed.
 Let $\fa $ be a proper finitely generated ideal of $R$ with the property that $\Ht(\fa) \geq \mu(\fa)$.
Set $\ell:=\mu(\fa)$ and let $\underline{y}:= y_1,\ldots,y_{\ell}$ be a generating set for $\fa$.   In view of Lemma \ref{fg2}, there exists $\underline{x}:= x_1,\ldots,x_l$ in $\fa$ such that $$i \leq \Ht_R((x_1,\ldots,x_i)R),$$ for each $1\leq i \leq \ell $.
Let $1\leq i\leq \ell$ and set $\fa_i:=(x_1,\ldots,x_i)R$. In view of part $\mathrm{(i)}$,  $$i\leq\Ht \fa_i=\Kgrade_R(\fa_i,R) \leq  \mu(\fa_i)\leq i.$$
So by \cite[Proposition 3.3(e)]{HM}, $\underline{x}$ is a strong parameter sequence on $R$. In view of Remark \ref{rem}, $R$ is Cohen-Macaulay in the sense of Hamilton-Marley. Therefore, $\underline{x}$ is a regular sequence on $R$.

   There are $r_{ij} \in R$ such that $x_i = \sum _{1 \leq j \leq l }r_{ij}y_j$ for all $1 \leq i \leq l$. Recall that $R_m = R \cap k[X_1,\ldots,X_{b_m}]$ for all $m$. Take  $n \in \mathbb{N}$ be such that all of $r_{ij}$,  $\underline{x}$ and $\underline{y}$  belong to $R_m$ for all $m \geq n$.

 Suppose $\fp \in \wAss(R/{\fa})$. Clearly $\underline{x}$ is a regular sequence on $R_\fp$. Set $\fp_m := \fp \cap R_m$ for all $m \geq n$. The purity of $R_m \to R$ implies that $\underline{x}$ is  a regular sequence on $R_m$ (see \cite [Proposition 6.4.4]{BH}). Then $\underline{x}$ is a regular sequence on ${R_m}_{(\fp_m)}$. Note that  $\underline{x}R_m\subseteq\underline{y}R_m$. Thus $$\ell \leq \Ht_{{R_m}_{(\fp_m)}}(\underline{x}{R_m}_{(\fp_m)}) \leq \Ht_{{R_m}_{(\fp_m)}}(\underline{y}{R_m}_{(\fp_m)})\leq \ell.$$ Since ${R_m}_{(\fp_m)}$ is a Noetherian Cohen-Macaulay local ring, we see $\underline{y}$ is a regular sequence on ${R_m}_{(\fp_m)}$. Thus $\underline{y}$ is a regular sequence on $R_{\fp}$ .

In view of \cite[Theorem 3.3]{AT} and  \cite[Lemma 3.5]{AT}, $R_{\fp}/{\underline{y}R_{\fp}}$ is Cohen-Macaulay in the sense of ideals. It follows from   \cite[Lemma 3.9]{AT} that $\wAss_{R_{\fp}}( R_{\fp}/{\underline{y}R_{\fp}}) = \Min ( R_{\fp}/{\underline{y}R_{\fp}})$ and so $\fp \in \Min(\fa)$.
\end{proof}

\begin{remark}
$\mathrm{(i)}$
As Remark  \ref{rem} says, Cohen-Macaulay in the sense of ideals implies weak
Bourbaki unmixedness,
when  the base ring is coherent.
Note that in the above theorem  $R$ is not
necessarily coherent (see \cite[Example 2]{G2}).

$\mathrm{(ii)}$ It may be worth to note that one can construct a direct system of Noetherian
Cohen-Macaulay rings such that its  direct limit is
not Cohen-Macaulay, see \cite[Example 4.7]{ADT}.
\end{remark}

We are now ready to prove:

\begin{corollary}\label{htpoly1}
Let $k$ be a field and $R$ a pure $k$-subalgebra of $S = k[X_1, \ldots]$  generated by monomials.  Then $R$ is
Cohen-Macaulay in the sense of each part of
Definition \ref{def1}.
\end{corollary}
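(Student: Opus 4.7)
The plan is to invoke Theorem \ref{htpoly} with $b_n=n$; for this it suffices to show that, for every $n$, the inclusion $R_n := R\cap k[X_1,\ldots,X_n] \hookrightarrow S_n := k[X_1,\ldots,X_n]$ is pure. Since $R$ is a $k$-subalgebra generated by monomials, $R_n$ is likewise generated by those monomial generators of $R$ that happen to involve only $X_1,\ldots,X_n$.

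The main step, and the one where the monomial hypothesis really does work, is to upgrade the given global purity to an honest $R$-linear retraction $\pi\colon S\to R$. Define $\pi$ on the monomial $k$-basis of $S$ by $\pi(\mu)=\mu$ when $\mu\in R$ and $\pi(\mu)=0$ otherwise. Clearly $\pi|_R=\mathrm{id}_R$. To verify $R$-linearity it suffices, by $k$-bilinearity, to check $\pi(r\mu)=r\pi(\mu)$ for a monomial $r\in R$ and a monomial $\mu\in S$; the only nontrivial case is $\mu\notin R$, where we must rule out $r\mu\in R$. Purity of $R\hookrightarrow S$ applied to $M=R/rR$ gives $rS\cap R = rR$, so $r\mu\in R$ would force $r\mu=r\rho$ for some $\rho\in R$, and cancelling $r$ in the integral domain $S$ would yield $\mu=\rho\in R$, a contradiction. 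This $R$-linearity of the monomial projection is the principal obstacle; everything else is formal.

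Once $\pi$ is known to be $R$-linear, the conclusion is automatic. Restricting $\pi$ to $S_n$ sends each monomial of $S_n$ into $R\cap S_n = R_n$ and is the identity on $R_n$; it is therefore an $R_n$-linear retraction $S_n\to R_n$, exhibiting $R_n$ as a direct $R_n$-summand, and in particular as a pure subring, of $S_n$. Theorem \ref{htpoly} applied with $b_n=n$ now delivers Cohen-Macaulayness of $R$ in every sense of Definition \ref{def1}.
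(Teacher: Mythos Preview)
Your argument is correct, but the route differs from the paper's. The paper uses the evaluation map $\pi_n\colon S\to S_n$ sending $X_{n+i}\mapsto 0$; because $R$ is spanned by monomials, $\pi_n$ restricts to a ring map $R\to R_n$, so $R_n$ is an $R_n$-summand of $R$. Purity of $R_n\hookrightarrow S_n$ then follows from the commutative square and Lemma~\ref{pure}: the composite $R_n\to R\to S$ is pure, and it equals $R_n\to S_n\to S$, so $R_n\to S_n$ is pure. By contrast, you build a different retraction, the monomial projection $\pi\colon S\to R$, and use the purity hypothesis (via $rS\cap R=rR$) to verify its $R$-linearity; restricting $\pi$ to $S_n$ then exhibits $R_n$ directly as a summand of $S_n$. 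Your approach is slightly more informative, since it shows along the way that $R$ itself is a direct summand of $S$ (not merely pure); the paper's approach avoids having to check $R$-linearity of any map by working with the ring homomorphism $\pi_n$, at the cost of an extra diagram chase. One cosmetic remark: your sentence ``$R_n$ is likewise generated by those monomial generators of $R$ that happen to involve only $X_1,\ldots,X_n$'' is true but unnecessary for the argument; all you use later is that $R_n$ is $k$-spanned by monomials.
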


\begin{proof}
There is a  natural projection $\pi_n:k[X_1,\ldots]\to k[X_1,\ldots,X_n]$ defined by evaluation: for each $f \in k[X_1,\ldots]$, $\pi_n(f)$ is given by the substitution $X_{n+i}= 0$  $\forall i \geq 1$. Set $R_n:= R\cap k[X_1,\ldots,X_n]$ for all $n\in \mathbb{N}$.

  We claim that $$\pi_n(R)\subseteq R_n.$$ To see this, let $r\in R$. As $R$ is generated by monomials,  $r=a_0+\ldots+a_m$
where $a_i \in R$ is a monomial. Note that either $\pi_n(a_i)=0$ or $\pi_n(a_i)=a_i$. In both cases
$\pi_n(r)\in R$, as claimed.

  Hence, we can define $\pi_n:R\to R_n$ and $\pi_n$ provides a retraction for the natural inclusion $R_n \to R$. Thus $R_n$ is a direct summand of $R$ as an $R_n$-module for all $n \in \mathbb{N}$.

   Let $n\in \mathbb{N}$. It follows from the following commutative diagram
   $$
\xymatrix{R_n\ar[r] \ar[d] & k[X_1,\ldots,X_n] \ar[d]\\
 R \ar[r] & k[X_1,\ldots],}
$$
that the ring homomorphism $R_n \hookrightarrow k[X_1,\ldots,X_n]$ is pure. Now,  the claim is an  immediate consequence of  Theorem \ref{htpoly}.
\end{proof}

In the following we cite some aspects of invariant theory that we need in the sequel. We refer the reader to \cite{HH1} and \cite{BH} for
more details.

\begin{remark}\label{red}Let $k$ be an algebraically closed field.
\begin{enumerate}
\item[(i)]
 Recall that a \textit{linear algebraic group} over $k$ is a Zariski closed subgroup of some $GL(V):=Aut_k(V)$, where $V$ is a finite-dimensional $k$-vector space. By a homomorphism of linear algebraic groups we mean a group homomorphism which is a morphism of varieties.
\item[(ii)]
Let $G$ be a linear algebraic group. Then $G$ acts $k$-\textit{rationally} on a finite-dimensional $k$-vector space $V$ if the map $\Phi : G \to GL(V)$ defining the action is a homomorphism of the linear algebraic groups. If $V$ is infinite-dimensional, $G$ acts  $k$-rationally on $V$ if the action is such that $V$ is a union of finite-dimensional $G$-stable subspaces $W$ such that $G$ acts $k$-rationally on $W$ in the sense above.

If $R$ is a $k$-algebra, $G$ acts on $R$ to mean that $G$ acts $k$-rationally on the $k$-vector space $R$ by $k$-algebra automorphism.  In invariant theory, it is commonly accepted that
"an action of an algebraic group on a $k$-algebra" means a rational one.  So, in the sequel we treat only with rational actions on $k$-algebras.

When $G$ acts $k$-rationally on a $k$-vector space $V$, we shall say that $V$ is a $G$-\textit{module}. Recall that  $U\subset V$ is said to be $G$-submodule, if it is a vector subspace of $V$ and $g(u)\in U$ for all $g \in G$ and $u \in U$.
 Also, $U$ is called \textit{irreducible} if it has no nontrivial $G$-submodule.

\item[(iii)]Let $G$ be a  linear algebraic group. Then $G$ is called
\textit{linearly reductive}, if every $G$-module $V$ is a direct sum of irreducible $G$-submodules. An equivalent condition is that every $G$-submodule  $W$ of  $V$ has a $G$-stable complement $L$, i.e., $V=W\oplus L$ as $G$-modules (see \cite[Page 170]{HH1}).

The most classical examples of linearly reductive groups are finite groups $G$ whose order is not divisible by $char$  $k$. In characteristic $0$, the groups $GL(n,k)$ and $SL(n,k)$ are linearly reductive, and so are the orthogonal and sympletic groups. The tori $GL(1,k)^m$ are linearly reductive independently of $char$ $k$ (see \cite[Page 292]{BH}).
\item[(iv)]
Let $G$ be a linearly reductive group and $V$ be a $G$-module. Let $V^G$ be the subspace of invariants, i.e., $$V^G = \{ v \in V : \; for \; all \; g \in G, \; g(v) = v \}.$$ Then $V^G$ is the largest $G$-submodule of $V$ on which $G$ acts trivially. Let $W$ be the sum of all irreducible $G$-subspaces of $V$ on which $G$ acts non-trivially. Then $V   = V^G \oplus W$, and $W$ is the unique complementary $G$-subspace of $V$ (see \cite[Page 170]{HH1}).
\item[(v)]
Let $G$ be a linearly reductive group and $R = k[X_1,\ldots, X_n]$.  Denote the graded component  containing homogenous  elements of degree $i$ of $R$ by $R_i$. Suppose $G$ acts   on $R$ by degree-preserving  $k$-algebra homomorphisms. This means that $g(R_n) \subseteq R_n$ for all $g \in G$ and $n \in \mathbb{N}$. Then $$R^G = \{f \in R : g(f)=f \; for \; all \; g \in G\}$$ is the ring of invariants.  There exists a finite dimensional representation $\varphi _i : G \to GL_k(R_i)$ for each $i$. By $(iv)$, $R_i = R_i^G \oplus W_i$ for each $i$. Then $$R = \oplus_{i \geq 0} R_i \cong ( \oplus_{i \geq 0} R_i^G) \oplus (\oplus_{i\geq 0} W_i).$$ Keep in mind  that the action is degree preserving. Then we have $R^G = ( \oplus_{i \geq 0} R_i^G)$. Set $W = (\oplus_{i\geq 0} W_i)$. We show that $W$ is an $R^G$-module. Consequently, $R^G$ is a direct summand of $R$ as an $R^G$-module.

Let $r \in R^G$ and $a \in W$. Then $r = r_1 + \ldots +r_t$ and $a = a_1 + \ldots +a_t$ where $r_i \in R_i^G$ and $a_i \in W_i$. For each $a_j$, there exists an irreducible $G$-subspace $U$ of $W_j$ such that $a_j \in U$. Consider the $G$-homomorphism $r_i : U \to r_i U$. This map is zero or one-to-one. If the map is zero, then  $r_i U=0\subseteq  W_{i+j}$. If the map is one-to-one, then $U\simeq r_i U$ as $G$-spaces. It follows that
$G$ acts nontrivially on the irreducible $G$-space  $r_i U$. Since $r_i U\subseteq R_{i+j}$, one has $r_i U\subseteq  W_{i+j}$. So, $ra\in W$
and $W$ is an $R^G$-module.
\end{enumerate}
\end{remark}

Now we are ready to prove the following result.

\begin{corollary}\label{inv}
Let $k$  be an algebraically closed field and $A=k[X_1,\ldots]$. Suppose $G$ is a linearly reductive group over $k$
acting on $A$  by  degree-preserving $k$-algebra automorphisms.
Then $A^G$ is
Cohen-Macaulay in the sense of each part of
Definition \ref{def1}.
\end{corollary}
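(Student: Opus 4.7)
The plan is to reduce to Theorem \ref{htpoly} by suitably reindexing the variables of $A$. Since the invariant ring $A^G$ need not be monomially generated, Corollary \ref{htpoly1} does not apply directly; I must exhibit a strictly increasing sequence $\{b_n\}$ for which $A^G\cap k[X_1,\ldots,X_{b_n}]\hookrightarrow k[X_1,\ldots,X_{b_n}]$ is pure. The main obstacle is that the $G$-action can mix the $X_i$ across the naive filtration, so the finite polynomial subrings $k[X_1,\ldots,X_n]$ need not be $G$-stable, and the intersections $A^G\cap k[X_1,\ldots,X_n]$ could be poorly behaved.

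To overcome this, I would use the rationality of the action on the degree-one component $A_1=\bigoplus_{i\geq 1}kX_i$. For each $i$, by Remark \ref{red}(ii) pick a finite-dimensional $G$-stable subspace $W_i\subseteq A_1$ containing $X_i$, and set $V_n:=W_1+\cdots+W_n$. Then $V_1\subseteq V_2\subseteq\cdots$ is an ascending chain of finite-dimensional $G$-stable subspaces of $A_1$ whose union is all of $A_1$. Choose a nested basis $Y_1,Y_2,\ldots$ of $A_1$ extending a chosen basis of each $V_n$, and set $b_n:=\dim_k V_n$. The $Y_i$ are linearly independent elements of the one-component $A_1$, hence algebraically independent, and they generate $A$ as a $k$-algebra; thus $A=k[Y_1,Y_2,\ldots]$ is again a polynomial ring (in the sense of Notation 1.2), and the finite subring $k[Y_1,\ldots,Y_{b_n}]=k[V_n]$ is $G$-stable by construction.

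For each $n$, the degree-preserving action of $G$ on the finite-dimensional polynomial ring $k[V_n]$ decomposes every graded piece $k[V_n]_i$ as $k[V_n]_i^G\oplus W_{n,i}$, exactly as in Remark \ref{red}(v). Summing over $i$ exhibits $k[V_n]^G$ as a direct summand of $k[V_n]$ as a $k[V_n]^G$-module, so the inclusion $k[V_n]^G\hookrightarrow k[V_n]$ is pure. Because $k[V_n]$ is $G$-stable one has $A^G\cap k[V_n]=k[V_n]^G$, and therefore $A^G\cap k[Y_1,\ldots,Y_{b_n}]\hookrightarrow k[Y_1,\ldots,Y_{b_n}]$ is pure for every $n$. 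Applying Theorem \ref{htpoly} to $A^G\subseteq k[Y_1,Y_2,\ldots]=A$ with this sequence $\{b_n\}$ then yields that $A^G$ is Cohen-Macaulay in the sense of every part of Definition \ref{def1}.
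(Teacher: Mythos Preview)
Your proposal is correct and follows essentially the same route as the paper: reindex the variables so that the finite polynomial subrings $k[Y_1,\ldots,Y_{b_n}]$ are $G$-stable, invoke Remark~\ref{red}(v) to obtain purity of $k[V_n]^G\hookrightarrow k[V_n]$, note that $A^G\cap k[V_n]=k[V_n]^G$, and then apply Theorem~\ref{htpoly}. The only cosmetic difference is how the $G$-stable filtration of $A_1$ is produced: the paper decomposes $A_1$ into irreducible $G$-submodules via linear reductivity (Remark~\ref{red}(iii)) and takes partial direct sums, whereas you build the filtration directly from rationality (Remark~\ref{red}(ii)); both are perfectly adequate (and you should remark, as a triviality, that after discarding repetitions the $b_n$ are strictly increasing since $\bigcup V_n=A_1$ is infinite-dimensional).
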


\begin{proof}
Indeed, for simplicity, assume that each $X_i$ is of degree one. Set
$V:=\bigoplus_{i=1}^{\infty} kX_i$. It is easy to see that $V$ is a $G$-module.
Then by Remark 3.7 $\mathrm{(iii)}$, there is a decomposition $ V = \bigoplus V_i$ with each $V_i$ a finite-dimensional $G$-submodule of $V$. Now set $b_0 = 0$,
and $b_i =
\sum_{j=1}^i \dim _k V_j$, and take a $k$-basis $\{Y_{b_{i-1}+1},\ldots, Y_{b_i}\}$
of $V_i$ for each
$i \geq 1$.
The notation $\sym_{k}(W)$ stands for the \textit{symmetric} algebra of a $k$-vector space $W$.
Recall  from \cite[8.3.3 and 8.3.5]{G1} the following two items:\\

\begin{enumerate}
\item[$\mathrm{(i)}$] $\sym_k(V)=\sym(\bigoplus V_i)  \simeq\bigcup \sym_k(V_i)=\bigcup k[Y_{b_{i-1}+1},\ldots, Y_{b_i}]$,
\item[$\mathrm{(ii)}$] $\sym_k(V)=\sym(\bigoplus_n (\bigoplus_{i=1}^{n} kX_i) ) \simeq\bigcup \sym_k(\bigoplus_{i=1}^{n} kX_i)=\bigcup k[X_1,\ldots,X_n]$.
\end{enumerate}

Then, without loss of the generality one can replace $\{X_1,\ldots\}$ by the new variables $\{Y_i\}$. That is, there is a strictly increasing infinite sequence $\{b_n\}_{n\in \mathbb{N}}$ of positive integers such that
$$k[Y_1,\ldots,Y_{b_n}] \  \ is \  \ a \;\textit{G-submodule}\; of \; A \; \; \;  \forall n \in \mathbb{N}.$$
For each $n \in \mathbb{N}$, set $A_n:= k[Y_1,\ldots,Y_{b_n}]$. Then $G$ acts on $A_n$ by degree-preserving $k$-algebra automorphisms.
 By Remark \ref{red} (v), $A_n^G$ is a direct summand of $A_n$ as an $A_n^G$-module. Hence  $A_n^G\to A_n$  is pure. By applying Theorem \ref{htpoly}, $A^G$ is Cohen-Macaulay in the sense of each part of
Definition \ref{def1}.
\end{proof}

Also, Question \ref{Q} has an affirmative answer in the following case:

\begin{remark}\label{att}
Let $R = k[x_1, \ldots]$ be an infinite-dimensional polynomial ring over a field $k$ and $G$ a finite
group of automorphisms of $R$ such that the order of $G$ is a unit
in $R$.  Recall from \cite[Theorem 4.1]{AT} that $R$ is Cohen-Macaulay in the sense of each part of
Definition \ref{def1}.
By \cite[Theorem 5.6 and Proposition 5.7]{AT}, $R^G$ is
Cohen-Macaulay in the sense of each part of
Definition \ref{def1}.
\end{remark}

In the next section we give several examples in context of corollary \ref{inv}. As a special case, the next result  provides more evidence for
an affirmative answer for Question \ref{Q}.

\begin{example}\label{ee}
Let $k$ be a field with $\mathrm{char }(k)\neq2$ and $S = k[X_1, \ldots]$. The assignments
$X_{2i+1}\mapsto X_{2i+2}$ and $X_{2i}\mapsto X_{2i-1}$ define an automorphism
$g:S\to S$. Let $G$ be the group  generated by $g$. Then \begin{enumerate}
\item[$\mathrm{(i)}$]  The ring $k[X_1,\ldots,X_n]$ is not $G$-submodule of $S$ for all $n \in \mathbb{N}$.
\item[$\mathrm{(ii)}$]  The ring  $R:=S^G$ can not be generated by monomials.
\item[$\mathrm{(iii)}$] The ring $R$  is
Cohen-Macaulay in the sense of each part of
Definition \ref{def1}.
\end{enumerate}
\end{example}
\begin{proof}
Note that the order of $g$ is two. So, $G=\{1,g\}$.
\begin{enumerate}
\item[$\mathrm{(i)}$] This is trivial.
\item[$\mathrm{(ii)}$] It is clear that $X_1+X_{2}$ is invariant by $G$. If $R$ were generated by monomials,
then  $X_1$ and $X_{2}$ should be invariant, which is impossible.
\item[$\mathrm{(iii)}$]  The order of $G$ is invertible in $S$ and $S$ is
Cohen-Macaulay in the sense of each part of
Definition \ref{def1}. To conclude the argument  see Remark \ref{att}.
\end{enumerate}
\end{proof}

\section{Examples}
Next, we present several examples of non-Noetherian  Cohen-Macaulay rings, as an application of our main result.
The following gives Cohen-Macaulayness of infinite-dimensional determinantal rings.

\begin{example}
Let $\{z_{ij}:i,j\in\mathbb{N}\}$ be a family of variables
over an algebraically closed field $k$ of characteristic $0$. Let $Z:=(z_{ij})$ be a matrix. We denote the polynomial ring $k[z_{ij}:i,j\in\mathbb{N}]$ by $k[Z]$.
Let  $I_{n}(Z)$ be the ideal of  $k[Z]$  generated by the $n$-minors of $Z$.
Then $k[Z] / I_{n+1}(Z)$ is Cohen-Macaulay in the sense of each part of
Definition \ref{def1}.
\end{example}

\begin{proof}
First note that by an $n$-minor of $Z$ we mean  the determinant of an $n\times n$ submatrix of $Z$. Let $\{x_{ij}:i\in\mathbb{N}, 1\leq j\leq n\}$ and $\{y_{jk}:k\in\mathbb{N}, 1\leq j\leq n\}$ be two families of variables
over  $k$. Define the matrices
$X := (x_{ij})$ and $Y := (y_{jk})$. Look at the polynomial ring $R = k[X,Y]$. First, we show that $k[XY]\cong k[Z] / I_{n+1}(Z)$.

Consider the matrices $X_m = (x_{ij})_{1\leq i\leq m , 1 \leq j \leq n}$ and $Y_m = (y_{jk})_{ 1 \leq j \leq n,1\leq k\leq m}$ where $m$ is an integer greater than $n+1$. Let $Z_m = (z_{ij})_{ 1 \leq i \leq m,1\leq j\leq m}$ be an $m \times m$ submatrix of $Z$.  Then there exists the homomorphism of $k$-algebras $\varphi_m: k[Z_m] / I_{n+1}(Z_m) \to k[X_m,Y_m]$ such that $Z_m + I_{n+1}(Z_m) \to X_mY_m$. By \cite[Theorem 7.2]{BV}, $\varphi_m$ is an embedding. So the induced homomorphism $\hat{\varphi_m}: k[Z_m] / I_{n+1}(Z_m) \to k[X_mY_m]$ is an isomorphism. For each $m,l$ such that $n+1 \leq m \leq l$, let $$\pi_{ml}:  k[Z_m] / I_{n+1}(Z_m) \longrightarrow k[Z_l] / I_{n+1}(Z_l)\; \; \; and \; \; \;  \; \;  \lambda_{ml} : k[X_mY_m] \longrightarrow k[X_lY_l]$$ be the natural homomorphism of $k$-algebras. Then $$\{\hat{\varphi_m}\}_{m \geq n+1} : (k[Z_m] / I_{n+1}(Z_m), \pi_{ml}) \longrightarrow (k[X_mY_m],\lambda_{ml})$$ is an isomorphism of direct systems. On the other hand, $${\varinjlim}_{m \geq n+1} k[Z_m] / I_{n+1}(Z_m) \cong k[Z] / I_{n+1}(Z)\; \; \; and \; \; \;  {\varinjlim}_{m \geq n+1}k[X_mY_m] = k[XY].$$ Hence $k[XY]\cong k[Z] / I_{n+1}(Z)$.

Let $G := GL_n(k)$ be the general linear group. By Remark \ref{red} (iii),  $G $ is linearly reductive. For $M \in G$ and a polynomial $f(X,Y) \in k[X,Y]$ one puts $$M(f) := f(XM^{-1}, MY).$$  As  $M$ runs through $G$, this defines an action of $G$ on $R := k[X,Y]$ as a group of $k$-algebra automorphisms. Denote the polynomial ring $ k[X_m,Y_m]$ by $R_m$  for all $m \in \mathbb{N}$. Then $G$ acts on $R_m$ likewise $R$, i.e. $R_m$ is $G$-stable. By Corollary \ref{inv}, $R^G$ is  Cohen-Macaulay in the sense of ideals.   In order to show $k[Z] / I_{n+1}(Z)$ is Cohen-Macaulay, it is enough to show that $R^G = k[XY]$. In the light of \cite[Proposition 7.4 and Theorem 7.6]{BV}, $R_m^G = k[X_mY_m] $. Also we have $R^G = \cup_{m\in \mathbb{N}} R_m^G$ and $k[XY]= \cup_{m\in \mathbb{N}}k[X_mY_m]$. Therefore $R^G = k[XY]$.
\end{proof}

The following gives Cohen-Macaulayness of infinite-dimensional Grassmanian  rings.

\begin{example}
Let $\{x_{ij}:j\in\mathbb{N}, 1\leq i\leq m\}$  be a family of variables
over an algebraically closed field $k$ of characteristic $0$ and let $X:=(x_{ij})$ be the corresponding matrix. Set $R := k[X]$. Let $Gr_{m\infty}(k)$ be the $k$-subalgebra of $R$ generated by the $m$-minors of $X$. Then $Gr_{m\infty}(k)$ is Cohen-Macaulay in the sense of each part of
Definition \ref{def1}.
\end{example}

\begin{proof}
For each $n \in \mathbb{N}$,  set $X_n := \{x_{ij}: 1 \leq j \leq n, 1\leq i\leq m\}$ and $R_n := k[X_n]$. Suppose $n\geq m$ and  denote the  $k$-subalgebra of $R_n$ generated by the $m$-minors of $X_n$ by $Gr_{mn}(k)$. Clearly,  $Gr_{m\infty}(k) = \cup_{n\geq m}Gr_{mn}(k)$.

Let $G := SL_m(k)$. By Remark \ref{red} (iii),  $G $ is linearly reductive.  $G$ acts on $R$ via the assignment  $X \mapsto TX$ for all $T \in G$. Also, $G$ acts on $R_n$ likewise $R$ for all $n \in \mathbb{N}$.  By \cite[Corollary 7.7]{BV}, $Gr_{mn}(k)= R_n^G$. So $$Gr_{m\infty}(k) = \cup_{n\geq m}Gr_{mn}(k) = \cup_{n\geq m}R_n^G = R^G.$$ Now, it  follows from Corollary \ref{inv} that $Gr_{m\infty}(k)$ is Cohen-Macaulay in the sense of each part of
Definition \ref{def1}.
\end{proof}

The following extends \cite[Corollary 5.8]{AT} to a more general situation.

\begin{example}\label{br2}
Let $k$  be a field and $A:=k[X_1,\ldots]$. We recall the definition of  Veronese rings.
Let $f:=X_{i_1}^{j_1}\ldots X_{i_{\ell}}^{j_{\ell}}$ be a monomial in $A$. The degree of $f$ is defined by $d(f):=\sum_{k=1}^{\ell} j_k$. Let $d$ be a positive integer. We call the $k$-algebra  $A^{(d)}$, generated by all monomials of degree $d$, the $d$-th Veronese subring of $A$. Then $A^{(d)}$   is Cohen-Macaulay in the sense of each part of
Definition \ref{def1}.
\end{example}

\begin{proof}
Denote the Veronese subring  of $A_n=k[X_1,\ldots,X_n]$ by $A^{(d)}_n$. Recall that $A^{(d)}_n$ is  the $k$-subspace of $A_n$ is generated by $$\{X_1^{v_1}\ldots X_n^{v_n}| v_1, \ldots,v_n \in \mathbb{N}_0 , v_1+ \ldots + v_n \equiv 0 \ \ (\textbf{mod } d)\}.$$ Define $\rho: A_n \to A^{(d)}_n$ such that $\rho$ maps each monomial $r \in A_n\setminus A^{(d)}_n$ to $0$ and each  monomial $r \in A^{(d)}_n$ to itself. Extend $\rho$ linearly to $A_n$. One can see easily that $\rho$ is a retraction of $A^{(d)}_n$ to $A_n$. So, $A^{(d)}_n$ is a  direct summand of $A_n$.  It turns out that the ring extension  $ A^{(d)}_n\to A_n$ is pure.   On the other hand $A^{(d)} \cap A_n = A^{(d)}_n$. By applying Theorem \ref{htpoly}, $A^{(d)}$ is Cohen-Macaulay in the sense of each part of
Definition \ref{def1}.
\end{proof}

\begin{example}\label{se}
 Here, we give a natural extension of Example \ref{br2}.
Let $\{X_j : j \in \mathbb{N}\}$  be   a family of variables over a field $k$ and $A:=k[X_1,\ldots]$. Fix $s,t \in \mathbb{N}$ and  choose integers $k_{1,j},\ldots,k_{s,j} \in \mathbb{Z}$ for each $j \in \mathbb{N}$.  Let $H$ be the submonoid of $\mathbb{N}^\infty:=\cup_{n \in \mathbb{N}} \mathbb{N}^n$  consisting of the solutions of the homogeneous linear equations  $$\sum_{1\leq j \leq n} k_{i,j} X_j = 0, 1 \leq i \leq s $$ for all $n \geq t$. Then $H$ is a full subsemigroup of $\mathbb{N}^\infty$ that is for each $\alpha,\beta \in H$ with $\alpha-\beta \in \mathbb{N}^\infty$, one has $\alpha-\beta \in H$. Let $W$ be the $k$-span of the monomials $X_1^{a_1} \ldots X_n^{a_n}$ such that $(a_1,\ldots, a_n,0,\ldots) \in \mathbb{N}^\infty \setminus H$. If $\beta \in \mathbb{N}^\infty \setminus H$ and $\alpha \in H$, then $\alpha + \beta \in \mathbb{N}^\infty \setminus H$. Hence, $W$ is a $k[H]$-module   and $k[H]$ is direct summand of $A$. Since $k[H]$ is  a  $k$-subalgebra of $A$, is generated by monomials, then by  Corollary \ref{htpoly1}, $k[H]$ is Cohen-Macaulay in the sense of each part of
Definition \ref{def1}.
\end{example}

\begin{remark}
 In view of \cite {H}, the ring $k[H]$ of  Example \ref{se} appears in the following way.
Let $G = GL(1,k)^s$ and $\gamma = (\gamma_1,\ldots,\gamma_s) \in G$. The assignments $X_j \mapsto \gamma_1^{k_{1,j}}\ldots\gamma_s^{k_{s,j}} X_j$ define an action of $G$ on $A$.  For any monomial $\lambda = X_1^{a_1}\ldots X_n^{a_n}$ and for each $\gamma = (\gamma_1,\ldots,\gamma_s) \in G$, $\gamma$ sends $\lambda$ to $(\prod_{1\leq i \leq s}(\gamma_i^{k_{i,1}a_1+\ldots+k_{i,n}a_n}))\lambda$. It is well-known that the ring of invariants is spanned over $k$ by all monomials $x_1^{a_1} \ldots  x_n^{a_n}$, where $t \leq n$ and  the equations $$\sum_{1\leq j \leq n} k_{i,j} X_j = 0, , 1 \leq i \leq s$$are solved by $(a_1, \ldots,a_n)$. This means that $A^G = k[H]$.
\end{remark}

\begin{acknowledgement}
  We thanks to the referee who his/her suggestions leads to an improvement in presentation of the manuscript.
\end{acknowledgement}



\begin{thebibliography}{99}

\bibitem{An}
D.D. Anderson, \emph{A note on minimal prime ideals},
Proc. Amer. Math. Soc., {\bf {122}} (1994), 13–-14.


\bibitem{AH}
M. Aschenbrenner and Christopher J. Hillar, \emph{ Finite generation of symmetric ideals}, Trans. Amer. Math. Soc., {\bf {359}} (2007),  5171–-5192.


\bibitem{AT}
M. Asgharzadeh and M. Tousi, \emph{On the notion of Cohen-Macaulayness for non-Noetherian rings}, J. Algebra, {\bf {322}} (2009), 2297--2320.	

\bibitem{AD}
M. Asgharzadeh and M. Dorreh, \emph{Cohen-Macaulayness of non-affine normal semigroups}, arXiv:1302.5919.

\bibitem{ADT}
M. Asgharzadeh, M. Dorreh and M. Tousi, \emph{Direct limit of Cohen-Macaulay rings}, Journal of Pure and Applied Algebra, {\bf{ 218}},  (2014) , 1730--1744.

\bibitem{BH}
W. Bruns and J. Herzog,  \emph{Cohen-Macaulay rings}, Cambridge University Press, {\bf{39}}, Cambridge, (1998).

\bibitem{BV}
W. Bruns and U. Vetter, \emph{Determinantal rings}, Lecture Notes in Mathemathics, {\bf{1327}}, Springer-Verlag, Berlin, (1988).

\bibitem{G2}
S. Glaz, \emph{ Fixed rings of coherent regular rings},
Comm. Alg., {\bf{20}}(9), (1992), 2635--2651.

\bibitem{G1}
S. Glaz, \emph{ Commutative coherent rings},
Springer LNM, {\bf1371}, 1989.

\bibitem{HM}
T.D. Hamilton and T. Marley, \emph{ Non-Noetherian Cohen-Macaulay rings}, J. Algebra, {\bf{307}} (2007), 343–-360.

\bibitem{Ha}
T.D. Hamilton, \emph{  Weak Bourbaki
unmixed rings: A step towards non-Noetherian Cohen-Macaulayness},
Rocky Mountain J. Math., {\bf34}(3), (2004), 963–-977.

\bibitem{HS}
Christopher J. Hillar and S. Sullivant, \emph{ Finite Grobner bases in infinite dimensional polynomial rings and applications}, Adv. Math.  {\bf{229}}, (2012), 1–-25.

\bibitem{H}
M. Hochster, \emph{Rings of invariants of tori, Cohen-Macaulay rings generated by monomials, and polytopes},
Ann. of Math., {\bf{96}}, (1972), 318–-337.

\bibitem{HH1}
M. Hochster, \emph{Invariant theory of commutative rings}, Group actions on rings (Brunswick, Maine, 1984), 161-–179, Contemp. Math., {\bf43}, Amer. Math. Soc., Providence, RI, 1985.

\bibitem{HH}
M. Hochster anh C. Huneke, \emph{ Application of the existence of big Cohen-Macaulay algebras}, Advances in mathematics, {\bf{113}}, (1995), 45--117.



\bibitem{HR} M. Hochster and Joel L. Roberts,  \emph{ Rings of invariants of reductive groups acting on regular rings are Cohen-Macaulay}, Advances in Math.  {\bf {13}} (1974), 115–-175.

\bibitem{K}
I. Kaplansky,  \emph{Commutative rings}, Revised edition, The University of Chicago Press, Chicago, Ill.-London, (1974).


\bibitem{Sch}P. Schenzel
 \emph{ Proregular sequences, local cohomology, and completion}, Math.
Scand., {\bf92}(2), (2003), 271–-289.
\end{thebibliography}
\end{document}